\documentclass{amsart}
\usepackage{amscd}
\usepackage{amsfonts}
\usepackage{amsmath}
\usepackage{amssymb}
\usepackage{amsthm}
\usepackage{fancyhdr}
\usepackage{latexsym}

\newtheorem{theorem}{Theorem}
\newtheorem{lemma}[theorem]{Lemma}
\newtheorem{remark}[theorem]{Remark}

\newcommand{\R}{\mathbb R}

\newcommand{\p}{\partial}

\begin{document}

\title[Ill-posedness for the $b$-Novikov equation]{Ill-posedness in the critical Sobolev space for the $b$-Novikov equation}

\author{Dan-Andrei Geba, A. Alexandrou Himonas and Curtis Holliman}

\address{Department of Mathematics, University of Rochester, Rochester, NY 14627, U.S.A.}
\email{dangeba@math.rochester.edu}
\address{Department of Mathematics, University of Notre Dame, Notre Dame, IN 46556, U.S.A.}
\email{himonas.1@nd.edu}
\address{Department of Mathematics, The Catholic University of America, Washington, DC 20064, U.S.A.}
\email{holliman@cua.edu}
\date{}

\begin{abstract}
This article proves norm inflation in the critical Sobolev space  $H^{3/2}(\R)$ for  the $b$-Novikov equation, which is a $1$-parameter family of Camassa-Holm-type equations with cubic nonlinearities. This result completes the well-posedness theory for this equation, which was previously known to be locally well-posed in $H^{s}(\R)$ for $s>3/2$ and ill-posed in  $H^{s}(\R)$ for $s<3/2$.
\end{abstract}

\subjclass[2000]{35Q53, 37K10}
\keywords{$b$-Novikov equation, Camassa-Holm-type equations, Cauchy problem, well-posedness, ill-posedness, norm inflation.}

\maketitle


\section{Introduction}

In this paper, we consider the Cauchy problem for the $b$-Novikov equation ($b$NE) on the line, which is given by
\begin{equation}
\begin{cases}
(1-\p^2_x)u_t= u^2u_{xxx}+bu u_x u_{xx}- (b+1)u^2u_x, &\quad u=u(x,t),\\
u(x,0)=u_0(x), &\quad x\in \R, \quad t\in \R,
\end{cases}
\label{cp}
\end{equation}
and we show that it is ill-posed (IP) in the Sobolev space $H^{3/2}(\R)$. The $b$NE can be seen as part of the family of generalized Camassa-Holm equations (also known as the g-$kb$CH equation)
\begin{equation}
(1-\p^2_x)u_t= u^ku_{xxx}+bu^{k-1} u_x u_{xx}- (b+1)u^ku_x,\label{gkb}
\end{equation}
where $k\in \mathbb{Z}^+$ and $b\in \R$, which also includes the $b$-family of equations
\begin{equation}
(1-\p^2_x)u_t= u^ku_{xxx}+b u_x u_{xx}- (b+1)u u_x \label{b}.
\end{equation}

Notable members of \eqref{gkb} are the Camassa-Holm equation (CH) (corresponding to $(k,b)=(1,2)$), the Degasperis-Procesi equation (DP) (corresponding to $(k,b)=(1,3)$), and the Novikov equation (NE) (corresponding to $(k,b)=(2,3)$), which are the only integrable g-$kb$CH equations. By an integrable equation, we understand one that admits infinitely many conserved quantities, a Lax pair, a bi-Hamiltonian formulation, and which can be solved by the inverse scattering method. A distinct feature of \eqref{gkb}  is that all equations possess \textbf{peakon travelling wave solutions} both on the line and on the circle. On the line, the peakon formula is given by
\[
u_c(t,x)=c^{1/k}e^{-|x-ct|},
\]   
where $c$ is any positive constant. 

Both CH and DP have a solid physical foundation, as they arise as shallow water wave models describing water wave propagation in the shallow regime (see, e.g., Lannes \cite{L13}, Whitham \cite{W74}). The derivation of CH by Camassa and Holm \cite{CH93} was partly motivated by the question of whether water waves can simultaneously exhibit wave breaking and peaked traveling wave solutions. It is also noteworthy that CH had been discovered earlier by Fokas and Fuchssteiner \cite{FF81} in the context of hereditary symmetries. More broadly, the $b$-family of equations - which includes both CH and DP - was introduced by Holm and Staley \cite{HS03,HS032} as a one-dimensional model for active fluid transport.

By comparison, the discovery of NE arose from an integrability study conducted by Novikov \cite{N09} within the broader class of Camassa-Holm type equations
\[
(1-\p^2_x)u_t=P(u, u_x, u_{xx}, \ldots),
\]
where $P$ denotes a polynomial in $u$ and its spatial derivatives. This investigation led to the identification of more than twenty integrable equations with quadratic nonlinearities - including CH and DP - as well as more than ten integrable equations with cubic nonlinearities, among which NE is a prominent example.

All of the equations described above are nonlinear evolution equations, and their well-posedness (WP) theory in the sense of Hadamard (i.e., existence, uniqueness, and continuity of the data-to-solution map) therefore constitutes a natural and fundamental object of study. This subject has received considerable attention in the literature, with contributions by Rodriguez-Blanco \cite{R01} and Li and Olver \cite{LO00} for CH, Gui and Liu \cite{GL11} for DP, Gui, Liu, and Tian \cite{GLT08} for the $b$-family of equations, and Himonas and Holliman \cite{HH12} for NE, among others. Subsequently, Himonas and Holliman \cite{HH14} established in a unified framework that the g-$kb$CH equation is locally WP in $H^s$ for $s>3/2$, both on the real line and on the circle. Moreover, they showed that, within the same Sobolev range, the associated data-to-solution map fails to be uniformly continuous between the corresponding functional spaces. This lack of uniform continuity is consistent with the quasilinear nature of the g-$kb$CH equation.

Complementing the WP theory, there is by now a fairly comprehensive body of results concerning IP for these equations. Notable contributions include Byers \cite{B06} and Himonas and Kenig \cite{HK09}  for CH, Himonas, Holliman, and Grayshan \cite{HHG14} for DP, Himonas, Grayshan, and Holliman \cite{HGH16} for the $b$-family of equations, Himonas, Holliman, and Kenig \cite{HHK18} for the Novikov equation, and Himonas and Holliman \cite{HH22} for the $b$NE. Collectively, these works establish IP in $H^s$ for $s<3/2$, both on the real line and on the circle, by demonstrating either nonuniqueness of solutions or norm inflation, the latter implying discontinuity of the data-to-solution map. For instance, Himonas, Holliman, and Kenig \cite{HHK18} showed that the Novikov equation exhibits nonuniqueness when $s\leq 5/4$ and norm inflation in the range $5/4<s<3/2$. 

In light of the WP and IP results described above, it is natural to regard $s=3/2$ as the critical Sobolev index for these equations and to ask whether this endpoint belongs to the WP or IP regime. In a notable result - particularly striking for its simplicity - Guo, Liu, Molinet, and Yin \cite{GLM19} resolved this question by showing that the $b$-family of equations is IP in $H^{3/2}$, both on the real line and on the circle. The same work also established IP of NE in $H^{3/2}$ on the real line. More precisely, building crucially on previously known blow-up results for these equations, Guo et al. \cite{GLM19} proved that norm inflation occurs in any of the Besov space $B^{1+1/p}_{p,r}$  with $1\leq p\leq \infty$ and $1< r\leq \infty$; note in particular that $B^{3/2}_{2,2}=H^{3/2}$.

This result naturally led us to investigate the behavior of the $b$NE in $H^{3/2}(\R)$. In this direction, our main result can be stated as follows:

\begin{theorem}
The $b$NE is IP in $H^{3/2}(\R)$ when $b\geq 3$. More precisely, we have norm inflation in the sense that, for every $\epsilon>0$, there exist a solution $u_\epsilon$ to the $b$NE and $0<t_\epsilon<\epsilon$ such that
\begin{equation}
\|u_\epsilon(0)\|_{H^{3/2}(\R)}\lesssim \epsilon \label{0}
\end{equation}
and
\begin{equation}
\|u_\epsilon(t_\epsilon)\|_{H^{3/2}(\R)}\gtrsim \epsilon^{-1}.\label{-1}
\end{equation}
\label{main}
\end{theorem}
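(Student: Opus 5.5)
We follow the strategy of Guo, Liu, Molinet, and Yin \cite{GLM19}: combine a blow-up criterion with a scaling/small-data construction. For $s>3/2$ the local well-posedness theory of \cite{HH14} gives solutions. We then look for a smooth initial datum $u_0$ such that (i) $\|u_0\|_{H^{3/2}}$ is arbitrarily small, and (ii) the solution develops $\|u_x(t)\|_{L^\infty}\to\infty$ in a very short time. The point is that in $H^{3/2}$ the quantity $\|u_x\|_{L^\infty}$ is \emph{not} controlled by the norm, but it is controlled by $H^{3/2}$ plus a logarithmic factor. So once $u_x$ becomes unbounded, the $H^{3/2}$ norm must inflate.

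\textbf{Step 1: a Riccati-type blow-up mechanism.} Write the $b$NE in nonlocal form,
\[
u_t-u^2u_x=-(1-\p_x^2)^{-1}\Big[\tfrac{b}{2}\p_x\big(uu_x^2\big)\text{-type and }u^2u_x\text{-type terms}\Big]=:F(u),
\]
using $(1-\p_x^2)^{-1}=\tfrac12 e^{-|x|}*$. Differentiate in $x$ and evaluate along the characteristic $q_t=-u^2(q,t)$ starting from a point $x_0$. Set $m(t)=u_x(q(t),t)$. For $b\ge3$ we expect an inequality of the form
\[
\frac{d}{dt}\big(u\,m\big)\le -c\,(u\,m)^2+C\|u\|_{L^\infty}^2\|u\|_{H^1}^2 \quad\text{or}\quad \frac{d}{dt}m\le -\gamma\, u^2m^2+\text{l.o.t.}
\]
The restriction $b\ge3$ should enter exactly here, in making the dangerous quadratic term carry a good sign. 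This mirrors the known Novikov ($b=3$) blow-up arguments and the blow-up results of \cite{HH22}. The lower-order terms are bounded by conserved or controlled quantities: $\|u\|_{L^\infty}\lesssim\|u\|_{H^1}$, and the $H^1$ norm is either conserved or grows at a controlled rate on short time intervals via an energy estimate.

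\textbf{Step 2: the data.} Take $u_0(x)=\epsilon\,\phi(x)+$ a piece of the form $\epsilon\,\sum_k$ log-weighted, or more simply
\[
u_0(x)=\epsilon\,\frac{\psi(x)}{\log^{\alpha}(1/|x|)}\ \text{near }0, \quad \alpha\in(1/2,1).
\]
Here $\psi$ is a cutoff with a fixed nonzero value of $u_0$ near $0$ and $u_0'$ very negative at a point $x_0$. Smoothing by mollification at scale $\delta$ keeps $\|u_0\|_{H^{3/2}}\lesssim\epsilon$, because the log-weight exponent $\alpha>1/2$ puts $u_0$ in $H^{3/2}$. At the same time $u_0(x_0)u_0'(x_0)\approx -\epsilon^2\delta^{-1}\log^{-\alpha}(1/\delta)\to-\infty$ as $\delta\to0$. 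By Step 1, the lifespan is then $T\lesssim |u_0u_0'(x_0)|^{-1}$, and for $\delta$ small this is less than $\epsilon$.

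\textbf{Step 3: inflation.} Suppose, to the contrary, that $\|u(t)\|_{H^{3/2}}\le\epsilon^{-1}$ on $[0,\epsilon]$. A Brezis–Gallouët/Beale–Kato–Majda type estimate for the $b$NE gives
\[
\frac{d}{dt}\|u\|_{H^{s}}^2\lesssim \|u\|_{L^\infty}\|u_x\|_{L^\infty}\|u\|_{H^s}^2 \quad (s>3/2),
\]
together with $\|u_x\|_{L^\infty}\lesssim \|u\|_{H^{3/2}}\log^{1/2}(e+\|u\|_{H^{s}})$. Feeding both into Gronwall yields a double-exponential bound on $\|u\|_{H^s}$, hence on $\|u_x\|_{L^\infty}$, on $[0,\epsilon]$. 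This contradicts the blow-up at time $T<\epsilon$, so some $t_\epsilon<\epsilon$ satisfies \eqref{-1}.

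The main obstacle is Step 1. We need a clean Riccati inequality for the cubic nonlinearity that is valid for every $b\ge3$, with the nonlocal terms controlled only by small low norms of size $\epsilon$. Otherwise these terms could dominate on the short time scale. We would first try tracking $uu_x$ along characteristics, imitating the Novikov computation in \cite{GLM19}.
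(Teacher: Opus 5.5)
Step 1 is the heart of the proof, and you leave it as an expectation. In the form you propose it does not go through for $b\neq 3$, so there is a genuine gap.

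\textbf{Step 1 is missing its key idea.} There are no known blow-up results for the $b$NE when $b\neq 3$; \cite{HH22} concerns instability and nonuniqueness. That absence is exactly why the route of \cite{GLM19} is unavailable. The $H^1$ norm is also not conserved: $\frac{d}{dt}\|u\|_{H^1}^2=2(3-b)\int_\R uu_x^3\,dx$, which involves the very quantity that blows up. The characteristics are $q_t=+u^2$, since the equation reads $u_t+u^2u_x+F(u)=0$. Along one of them you get $\frac{d}{dt}(uu_x)=-2u^2u_x^2-u_xF(u)-u\,\partial_xF(u)$. The nonlocal term $u_xF(u)$ carries a bare factor $u_x$ rather than $uu_x$. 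It cannot be absorbed into a Riccati term for $uu_x$ without a lower bound on $|u|$ along the characteristic, and nothing in your argument provides one.

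The missing idea is a sign structure.
\begin{itemize}
\item Choose data with $y_0=(1-\partial_x^2)u_0\ge 0$ on $(-\infty,0]$, $y_0(0)=0$, and $y_0\le 0$ on $[0,\infty)$.
\item The identity $y(q(x,t),t)\,q_x^{b/2}(x,t)=y_0(x)$ propagates this pattern around $q(0,t)$. Writing $u\pm u_x$ as one-sided exponential integrals of $y$ then gives $u+u_x\le 0$ on $[q(0,t),\infty)$ and $u-u_x\ge 0$ on $(-\infty,q(0,t)]$.
\item At $q(0,t)$ one has $\frac{d}{dt}(2uu_x)=2u^4+2u^2u_x^2+(u+u_x)I_+-(u-u_x)I_-$, where $I_\pm$ are one-sided integrals of $y_t$.
\item After integrating by parts, the cubic integrands factor as $(u\pm u_\xi)Q_\pm+\frac{4-b}{2}(u^3\mp 3u^2u_\xi)$, with $Q_\pm=\frac{b-2}{2}u^2\pm(4-b)uu_\xi+\frac{b-2}{2}u_\xi^2$. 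This form is nonnegative exactly when $b\ge 3$, which is where the hypothesis enters. The remainder integrates to a multiple of $u^3(q(0,t),t)$.
\item The result is $\frac{d}{dt}\{2uu_x(q(0,t),t)\}\le (b-2)u^2(u^2-u_x^2)(q(0,t),t)$, whose only error term is $u^4$.
\end{itemize}
That $u^4$ term is controlled by the contradiction hypothesis itself, via $\|u\|_{L^\infty}\lesssim\|u\|_{H^{3/2}}$, not by a conservation law. So blow-up is never established unconditionally, and the whole argument has to run inside the contradiction assumption.

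\textbf{Step 2 uses the wrong data.} The function $\psi(x)\log^{-\alpha}(1/|x|)$ has derivative of size $1/(|x|\log^{\alpha+1}(1/|x|))$, which is not square integrable near $0$. So it is not even in $H^1$, and mollifying at scale $\delta$ makes its $H^{3/2}$ norm blow up as $\delta\to 0$. Your claimed $|u_0u_0'(x_0)|\approx\epsilon^2\delta^{-1}\log^{-\alpha}(1/\delta)$ also contradicts the logarithmic inequality you invoke in Step 3. With $\|u_0\|_{H^{3/2}}\lesssim\epsilon$, that inequality allows at most $|u_0'|\lesssim\epsilon\log^{1/2}(1/\delta)$ at scale $\delta$.

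Suitable data have only logarithmic growth of $u_0'$ and must also satisfy the sign condition on $y_0$. The paper uses $\epsilon u_{0,K}$ from \cite{GLM19}, with $\|u_{0,K}\|_{H^{3/2}}\lesssim 1$, $u_{0,K}(0)\gtrsim 1$ and $u_{0,K}'(0)\sim -K^{1/3}$. Then $f(t)=uu_x(q(0,t),t)$ starts at $f(0)\lesssim-\epsilon^2K^{1/3}$. Taking $K$ a large negative power of $\epsilon$, the inequality $f'\le -\frac{b-2}{2}f^2+\frac{b-2}{2}u^4$, with $u^4$ bounded by the assumed $H^{3/2}$ bound, drives $f$ to $-\infty$ before time $\epsilon$.

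\textbf{Step 3 is a valid alternative.} The paper ensures the solution survives past $\epsilon$ through a refined lifespan bound $T\gtrsim (2s-3)/\|u_0\|_{H^s}^2$ as $s\to 3/2^+$. A Brezis--Gallou\"et continuation argument under the assumed $H^{3/2}$ bound gives the same conclusion. The $H^s$ energy estimate actually involves $\|u\|_{W^{1,\infty}}^2$ because of the $D^{-2}(u_x^3)$ term, but Gronwall still closes with a double-exponential bound.
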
 

Throughout, we rely on the classical notation $A\lesssim B$ to denote the inequality $A\leq c\,B$, where $c>0$ is a constant. Correspondingly, $A\sim B$ signifies that both $A\lesssim B$ and $B\lesssim A$ hold. Moreover, the notation $A\ll B$ indicates that $A\leq c\,B$ holds with a constant $c>0$ that can be chosen to be sufficiently small.
 
A few remarks regarding the proof of this theorem are in order. A fundamental obstacle is the absence of any known blow-up results for the $b$NE when $b\neq 3$. This appears to be intrinsically linked to the fact that NE is the sole integrable member of the $b$NE family. As a result, the approach introduced by Guo et al. \cite{GLM19} is not applicable in our setting. In particular, our argument proceeds along a different route and, as a byproduct, provides an independent proof of IP in $H^{3/2}(\R)$ for NE.

The paper is organized as follows. In Section 2, we revisit the WP result of Himonas and Holliman \cite{HH14} for the g-$kb$CH equation and derive a sharper lower bound for the lifespan of solutions to the Cauchy problem \eqref{cp}. Section 3 is devoted to the proof of a key differential inequality for a nonlinear quantity evaluated along particle trajectories associated with the $b$NE.. In the final section, we first reformulate our main result and construct suitable initial data leading to norm inflation, building on a sequence originally introduced by Guo et al. \cite{GLM19} in their study of NE. We then combine the analytical tools developed in the preceding sections to complete the proof of our main result by contradiction.


\section{Relevant WP theory for the $b$NE}
As mentioned in the introduction, Himonas and Holliman proved in \cite{HH14} a WP result (stated there as Theorem 1) for the  g-$kb$CH equation \eqref{gkb}. We restate it below in the form relevant to the $b$NE Cauchy problem \eqref{cp}.

\begin{theorem}
Let $s>3/2$. If $u_0\in H^s(\R)$, then there exists $T=T( \|u_0\|_{H^s})>0$ and a unique solution $u\in C([0, T]; H^s)$ to the Cauchy problem \eqref{cp}. Moreover, the solution depends continuously on the initial data and satisfies the estimate 
\[
\|u(t)\|_{H^s}\leq 2 \|u_0\|_{H^s}, \qquad \forall\, 0\leq t\leq T\leq \frac{1}{c_s\|u_0\|^2_{H^s}},
\]
where the constant $c_s>0$ depends only on $s$.
\label{hh}
\end{theorem}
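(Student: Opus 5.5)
The plan is to rederive the a priori energy estimate behind \cite{HH14} for the specific cubic case $k=2$, keeping track of the homogeneity of the nonlinearity. Existence, uniqueness and continuous dependence are taken from \cite{HH14} (Galerkin or mollified approximations, compactness, and a Bona--Smith argument). The only new content is the explicit lifespan $T\sim \|u_0\|_{H^s}^{-2}$ together with the bound $\|u(t)\|_{H^s}\le 2\|u_0\|_{H^s}$.

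\emph{Step 1: nonlocal form.} First rewrite \eqref{cp} as a transport equation with a nonlocal forcing. Using $u^2u_{xxx}=\p_x^2(u^2u_x)-\p_x(2uu_x^2)-2uu_xu_{xx}$ and similar identities, one gets
\[
u_t+u^2u_x=F(u):=-\p_x(1-\p_x^2)^{-1}\Big[\tfrac{b}{3}u^3+\tfrac{3-b}{2}uu_x^2\Big]-(1-\p_x^2)^{-1}\Big[\tfrac{b-3}{2}u_x^3\Big].
\]
The exact coefficients will be checked by direct expansion; only the structure matters. Every term in $F(u)$ is cubic in $(u,u_x)$ and carries at most one derivative on each factor. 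The operator in front has order $\le 0$ (order $-1$ for the second bracket). Hence
\[
\|F(u)\|_{H^s}\lesssim \|u\|_{W^{1,\infty}}^2\|u\|_{H^s}\lesssim \|u\|_{H^s}^3 .
\]
This uses the algebra/Moser estimate $\|fgh\|_{H^{s-1}}\lesssim \|f\|_{L^\infty}\|g\|_{L^\infty}\|h\|_{H^{s-1}}+\dots$ and the embedding $H^{s}\subset W^{1,\infty}$, valid for $s>3/2$.

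\emph{Step 2: energy estimate.} Apply $\Lambda^s=(1-\p_x^2)^{s/2}$ and pair with $\Lambda^s u$. For the transport term, write
\[
\langle \Lambda^s(u^2u_x),\Lambda^s u\rangle=\langle[\Lambda^s,u^2]u_x,\Lambda^s u\rangle-\tfrac12\langle (u^2)_x\Lambda^s u,\Lambda^s u\rangle .
\]
The Kato--Ponce commutator estimate gives
\[
\|[\Lambda^s,u^2]u_x\|_{L^2}\lesssim \|(u^2)_x\|_{L^\infty}\|u\|_{H^s}+\|u^2\|_{H^s}\|u_x\|_{L^\infty}\lesssim \|u\|_{H^s}^3 .
\]
Combining with Step 1, we obtain $\frac{d}{dt}\|u\|_{H^s}^2\le c_s\|u\|_{H^s}^4$.

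\emph{Step 3: ODE comparison.} Set $y(t)=\|u(t)\|_{H^s}^2$. The inequality $y'\le c_sy^2$ gives $y(t)\le y(0)/(1-c_sy(0)t)$. Enlarging the constant $c_s$ if needed, for $t\le 1/(c_s\|u_0\|_{H^s}^2)$ the denominator is $\ge 1/4$. Hence $y(t)\le 4y(0)$, that is, $\|u(t)\|_{H^s}\le 2\|u_0\|_{H^s}$.

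\emph{Main obstacle: justification.} The main technical point is rigor rather than the computation. The pairing in Step 2 is not directly legitimate for an $H^s$ solution, since it requires $\Lambda^s u_x\in L^2$. I would therefore perform Steps 2--3 on smooth approximate solutions: either solutions with data $J_\varepsilon u_0$ (Friedrichs mollifier), which lie in $H^\infty$ and have uniform bounds, or directly on $J_\varepsilon u$, controlling the extra commutator $[J_\varepsilon,u^2]\p_x$ uniformly in $\varepsilon$ (Friedrichs' lemma). The uniform bound on $[0,T]$ with the stated $T$ then passes to the limit solution. This limit coincides with the solution of \cite{HH14} by uniqueness. Finally, a standard continuation argument shows that the solution persists on the whole interval $[0,1/(c_s\|u_0\|_{H^s}^2)]$, since the $H^s$ norm stays bounded there.
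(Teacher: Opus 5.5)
Your proposal is correct and follows essentially the paper's route, which the paper imports from \cite{HH14}: the nonlocal form $u_t+u^2u_x+F(u)=0$ with a cubic bound on $\|F(u)\|_{H^s}$, the Kato--Ponce commutator estimate giving $\frac{d}{dt}\|u\|_{H^s}^2\lesssim\|u\|_{H^s}^4$ on a regularized problem (the paper uses $v_t+J_\epsilon[(J_\epsilon v)^2J_\epsilon v_x]+F(v)=0$ instead of mollified data or $J_\epsilon u$), and an ODE comparison for the lifespan. Two slips do not affect the argument: the correct coefficients are $\frac{6-b}{2}$ on $\p_x D^{-2}(uu_x^2)$ and $\frac{b-2}{2}$ on $D^{-2}(u_x^3)$, where $D^{-2}=(1-\p_x^2)^{-1}$; and these operators have orders $-1$ and $-2$, not merely $\le 0$, which is exactly the derivative gain your $H^{s-1}$ product bound needs.
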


In our analysis, we will apply this theorem for values of $s$ close to $3/2$. Because the argument is delicate near this endpoint, it is essential to determine the precise behavior of the constant $c_s$ as $s\to 3/2+$. We claim that one may take 
\begin{equation}
c_s\sim \frac{1}{2s-3}.
\label{cs}
\end{equation}

To justify this, we start with the classical Sobolev embedding $H^\sigma(\R)\subset L^\infty(\R)$ when $\sigma>1/2$. A standard proof proceeds via
\begin{equation*}
\aligned
\|f\|_{L^\infty(\R)}\lesssim \|\hat{f}\|_{L^1(\R)}&\lesssim \|(1+\xi^2)^{-\sigma/2}\|_{L^2(\R)} \|(1+\xi^2)^{\sigma/2}\hat{f}\|_{L^2(\R)}\\
&\sim \left(\int_\R(1+\xi^2)^{-\sigma}\,d\xi\right)^{1/2}\|f\|_{H^\sigma(\R)},
\endaligned
\end{equation*}
where
\[
\hat{f}(\xi)=\int_\R e^{-ix\xi}\,f(x)\,dx
\] 
denotes the Fourier transform. Using the $\Gamma$ function and its properties, one obtains
\begin{equation*}
\int_\R(1+\xi^2)^{-\sigma}\,d\xi= \frac{\pi^{1/2}\,\Gamma(\sigma-1/2)}{\Gamma(\sigma)}\sim \frac{1}{\sigma-1/2}
\end{equation*}
as $\sigma\to 1/2+$. Consequently,
\begin{equation}
\|f\|_{L^\infty(\R)}\lesssim \frac{1}{(\sigma-1/2)^{1/2}}\|f\|_{H^\sigma(\R)}
\label{inf-s}
\end{equation}
as $\sigma\to 1/2+$.

We next recall the standard product estimate
\begin{equation*}
\|f\,g\|_{H^\sigma(\R)}\leq c_\sigma\left(\|f\|_{L^\infty(\R)}\|g\|_{H^\sigma(\R)}+\|f\|_{H^\sigma(\R)}\|g\|_{L^\infty(\R)}\right),
\end{equation*}
valid for $\sigma>0$, with $c_\sigma>0$ being a constant depending on $\sigma$. Since $1/2$ lies strictly within the admissible range for $\sigma$, we infer based on \eqref{inf-s} that
\begin{equation*}
\|f g\|_{H^\sigma(\R)}\lesssim \frac{1}{(\sigma-1/2)^{1/2}}\|f\|_{H^\sigma(\R)}\|g\|_{H^\sigma(\R)},
\end{equation*}
and, hence,
\begin{equation*}
\|f g h\|_{H^\sigma(\R)}\lesssim \frac{1}{\sigma-1/2}\|f\|_{H^\sigma(\R)}\|g\|_{H^\sigma(\R)}\|h\|_{H^\sigma(\R)},
\end{equation*}
as $\sigma\to 1/2+$.

Using this trilinear bound, we revisit Lemma 1 of \cite{HH14} in the setting of the $b$NE. The corresponding estimate becomes
\begin{equation}
\|F(u)\|_{H^s(\R)}\lesssim \frac{1}{2s-3}\|u\|^3_{H^s(\R)}\label{fu}
\end{equation}
as $s\to 3/2+$, where
\begin{equation*}
F(u)= \frac{b}{3}\p_x D^{-2}(u^3)+\frac{6-b}{2}\p_x D^{-2}(u u_x^2)+\frac{b-2}{2}D^{-2}(u_x^3)
\end{equation*}
and 
\[
\widehat{D^p f}(\xi)= (1+\xi^2)^{p/2}\hat{f}(\xi).
\]
Note that the $b$NE can equivalently be rewritten as
\begin{equation*}
u_t+u^2u_x+F(u)=0.
\end{equation*}

Besides \eqref{fu}, the remaining key ingredient in the derivation of the lifespan $T$ in Theorem \ref{hh} is the Kato-Ponce commutator estimate 
\begin{equation*}
\|[D^\sigma, f]\,g\|_{L^2(\R)}\leq c_\sigma\left(\|\p_xf\|_{L^\infty(\R)}\|D^{\sigma-1}g\|_{L^2(\R)}+\|D^\sigma f\|_{L^2(\R)}\|g\|_{L^\infty(\R)}\right),
\end{equation*}
valid for $\sigma>0$, with $c_\sigma>0$ being a constant depending on $\sigma$. Since $3/2$ lies safely within the admissible range, we can combine the previous estimates to obtain the following refinement of estimate (2.30) in \cite{HH14} for the $b$NE:
\begin{equation*}
\frac{1}{2}\frac{d}{dt}\|u_\epsilon(t)\|^2_{H^s(\R)}\lesssim \frac{1}{2s-3} \|u_\epsilon(t)\|^4_{H^s(\R)}
\end{equation*}
as $s\to 3/2+$, where $u_\epsilon=u_\epsilon(x,t)$ solves the mollified Cauchy problem
\begin{equation*}
\begin{cases}
v_t+J_\epsilon[(J_\epsilon v)^2 J_\epsilon v_x]+F(v)=0,\\
v(0,x)=u_0(x).
\end{cases}
\end{equation*}
Above, $(J_\epsilon)_{0<\epsilon\leq 1}$ are the standard Friedrichs mollifiers. Proceeding as in \cite{HH14}, one concludes  that the lifespan $T=T( \|u_0\|_{H^s})$ in Theorem \ref{hh} may be chosen so that
\begin{equation*}
T\leq \frac{c(2s-3)}{\|u_0\|^2_{H^s}},
\end{equation*}
where $c>0$ is an absolute constant. This establishes \eqref{cs}.


\section{Crucial differential inequality}\label{3}
In this section we establish a differential inequality that plays a central role in the norm inflation argument. Similar inequalities have appeared for other members of the g-$kb$CH family, where they were combined either with symmetry properties (as in the $b$CH equation) or with integrability (as in the NE) to deduce blow-up results. In contrast, for the $b$NE with $b\neq 3$, neither symmetry nor integrability is available. As a result, the analysis is substantially more delicate, since the differential inequality cannot be supplemented by additional structural properties to yield blow-up statements.


\subsection*{Preliminaries}
We anticipate and disclose that we will work with real-valued initial data $u_0\in H^\infty(\R)$ for \eqref{cp}, and we assume that
\[
y_0=(1-\partial^2_x)u_0
\]
satisfies
\begin{equation}\label{y0}
y_0\geq 0\ \text{on}\ (-\infty, 0], \quad y_0(0)=0, \quad y_0\leq 0 \ \text{on}\ [0,\infty).
\end{equation}
By Theorem \ref{hh}, there exists a unique (maximal) solution
\begin{equation}
u\in C([0,T_{\text{max}}); H^3(\R))\cap C^1([0,T_{\text{max}}); H^{2}(\R))\label{uh3}
\end{equation}
to \eqref{cp} corresponding to such initial data.

We next introduce the particle trajectory (flow) associated with $u$. For each $x\in \R$, let $q(x,t)$ solve
\begin{equation*} 
\begin{cases}
\frac{dq(x,t)}{dt}=u^2(q(x,t), t),\\
q(x,0)=x.
\end{cases}
\end{equation*}
In view of \eqref{uh3}, standard ODE theory ensures that this problem admits a unique solution 
\[
q\in C^1(\R\times [0,T_{\text{max}}); \R).
\] 
Differentiating the equation for $q$ with respect to $x$, we obtain
\begin{equation*}
q_x(x,t)=e^{2\int_0^t uu_x(q(x,s),s)\,ds}, \quad q_x(x,0)=1.
\end{equation*}
Hence, for each $t\in [0,T_{\text{max}})$, the map $x\to q(x,t)$ is an increasing diffeomorphism of $\R$. 

Recall that the $b$NE can equivalently be written as
\begin{equation}
y_t+u^2y_x+buu_x\,y=0, \qquad y=(1-\partial^2_x)u.\label{bne}
\end{equation}
Using this formulation, we compute
\begin{equation*}
\frac{d}{dt}\left\{y(q(x,t),t)q_x^{b/2}(x,t)\right\}=(y_t+u^2y_x+buu_x\,y)(q(x,t),t)\,q_x^{b/2}(x,t)=0,
\end{equation*}
and therefore
\begin{equation*}
y(q(x,t),t)q_x^{b/2}(x,t)=y(x,0)=y_0(x), \quad \forall\, (x,t)\in \R\times [0,T_{\text{max}}).
\end{equation*}
In view of \eqref{y0}, it follows that for each $t\in [0,T_{\text max})$, 
\begin{equation*}
y(q(x,t),t)\geq 0,\,(\forall)\,x< 0,\qquad y(q(0,t),t)= 0,\qquad y(q(x,t),t)\leq 0,\,(\forall)\,x> 0. 
\end{equation*}
Since $x\to q(x,t)$ is an increasing diffeomorphism of $\R$, we deduce
\begin{equation}
y(z,t)\geq 0,\,(\forall)\,z< q(0,t), \qquad y(z,t)\leq 0,\,(\forall)\,z> q(0,t). \label{yq0}
\end{equation}


\subsection*{Relevant integral identities}
Due to $y=(1-\partial^2_x)u$, we have the representation
\begin{equation*}
\aligned
u(x,t)=\int_\R \frac{1}{2} e^{-|x-\xi|}\,  y(\xi,t)\, d\xi=
\frac{1}{2} e^{-x} 
\int_{-\infty}^x  e^{ \xi}\, y(\xi,t) \,d\xi 
+
\frac{1}{2}
e^{x}
\int_x^\infty  e^{-\xi } \,y(\xi,t)\, d\xi.  
\endaligned
\end{equation*}
Differentiating with respect to $x$ yields
\begin{equation*}
u_x(x,t) = 
- \frac{1}{2} e^{-x} \int_{-\infty}^x e^\xi\, y(\xi,t)\, d\xi
+
\frac{1}{2} e^x 
\int_{x}^\infty  e^{ - \xi} \,y(\xi,t)\, d\xi. 
\end{equation*}
Consequently,
\begin{equation}\left\{\aligned
(u+u_x)(x,t) = e^{x} \int_x^{\infty} e^{-\xi}\, y(\xi,t)\, d\xi,\\
(u-u_x)(x,t) = e^{-x} \int_{-\infty}^x e^\xi\, y(\xi,t)\, d\xi.
\endaligned
\right.
\label{uux}
\end{equation}

From this point onward, we evaluate all quantities at
\begin{equation*}
(x,t)=(q(0,t), t),
\end{equation*}
using that
\begin{equation*}
q_t(0,t)=u^2(q(0,t), t)\qquad\text{and}\qquad y(q(0,t),t)= 0.
\end{equation*}
Employing the identity
\begin{equation*}
2 u u_x =  \frac{1}{2}(u + u_x)^2 -  \frac{1}{2}(u - u_x)^2,
\end{equation*}
a direct computation gives
\begin{equation}
\aligned
\frac{d}{dt}\left\{2 u u_x(q(0,t), t)\right\}=\,&2u^4(q(0,t), t)+2u^2u_x^2(q(0,t), t)\\&+ (u + u_x)(q(0,t), t) \cdot e^{q(0,t)} \int_{q(0,t)}^\infty e^{-\xi} \,y_t (\xi,t)\, d\xi\\&- (u - u_x)(q(0,t), t) \cdot e^{-q(0,t)} \int_{-\infty}^{q(0,t)} e^\xi\, y_t (\xi, t)\, d\xi\\
=\,&2u^4(q(0,t), t)+2u^2u_x^2(q(0,t), t)\\&+(u + u_x)(q(0,t), t) I_+(t)-(u - u_x)(q(0,t), t) I_-(t). 
\endaligned
\label{uxt}
\end{equation}
Above, we used the notation
\begin{align*}
I_+(t)=e^{q(0,t)} \int_{q(0,t)}^\infty e^{-\xi} \,y_t (\xi,t)\, d\xi,\qquad
I_-(t)=e^{-q(0,t)} \int_{-\infty}^{q(0,t)} e^\xi \,y_t (\xi, t)\, d\xi. 
\end{align*}

Using \eqref{bne} to substitute $y_t=-u^2y_x-buu_x\,y$ and integrating by parts, we obtain
\begin{equation*}
\aligned
I_+(t)=-e^{q(0,t)} \int_{q(0,t)}^\infty e^{-\xi} \,y(\xi,t)\left(u^2(\xi,t)+(b-2)uu_\xi(\xi,t)\right)\, d\xi,\\
I_-(t)=e^{-q(0,t)} \int_{-\infty}^{q(0,t)} e^\xi \,y(\xi, t)\left(u^2(\xi,t)-(b-2)uu_\xi(\xi,t)\right)\, d\xi. 
\endaligned
\end{equation*}
Following this, we substitute $y=u-u_{\xi\xi}$ in the integrands and we integrate by parts the terms containing a factor of $u_{\xi\xi}$. In this way, we derive
\begin{equation}\aligned
I_+(t)=&-u^2u_x(q(0,t), t)-\frac{b-2}{2}uu^2_x(q(0,t), t)\\
&-e^{q(0,t)} \int_{q(0,t)}^\infty e^{-\xi} \,\left(u^3+(b-3)u^2u_\xi+\frac{6-b}{2}uu_\xi^2+\frac{b-2}{2}u_\xi^3\right)(\xi,t)\,d\xi
\endaligned
\label{i+}
\end{equation}
and
\begin{equation}\aligned
I_-(t)=&-u^2u_x(q(0,t), t)+\frac{b-2}{2}uu^2_x(q(0,t), t)\\
&+e^{-q(0,t)} \int^{q(0,t)}_{-\infty} e^{\xi} \,\left(u^3-(b-3)u^2u_\xi+\frac{6-b}{2}uu_\xi^2-\frac{b-2}{2}u_\xi^3\right)(\xi,t)\,d\xi.
\endaligned
\label{i-}
\end{equation}


\subsection*{The derivation of the differential inequality}
First, from \eqref{yq0} and \eqref{uux} we deduce
\begin{equation}
(u+u_x)(x,t)\leq 0\ (\forall)\, x<q(0,t), \qquad (u-u_x)(x,t)\geq 0 \,(\forall)\, x>q(0,t).
\label{u+-}
\end{equation}
Next, note that
\begin{equation*}
\aligned
u^3+(b-3)u^2u_\xi&+\frac{6-b}{2}uu_\xi^2+\frac{b-2}{2}u_\xi^3\\
&=(u+u_\xi)\left(\frac{b-2}{2}u^2+(4-b)uu_\xi+\frac{b-2}{2}u_\xi^2\right)+\frac{4-b}{2}(u^3-3u^2u_\xi)
\endaligned
\end{equation*}
and
\begin{equation*}
\aligned
u^3-(b-3)u^2u_\xi&+\frac{6-b}{2}uu_\xi^2-\frac{b-2}{2}u_\xi^3\\
&=(u-u_\xi)\left(\frac{b-2}{2}u^2-(4-b)uu_\xi+\frac{b-2}{2}u_\xi^2\right)+\frac{4-b}{2}(u^3+3u^2u_\xi)
\endaligned
\end{equation*}
Moreover, 
\begin{equation*}
\frac{b-2}{2}z^2\pm(4-b)z+\frac{b-2}{2}\geq 0,\ (\forall)\,z\in\R,
\end{equation*}
is valid precisely when $b\geq 3$.

Combining all these facts in the context of \eqref{i+} and \eqref{i-}, we infer
\begin{equation*}\aligned
I_+(t)
\geq& -u^2u_x(q(0,t), t)-\frac{b-2}{2}uu^2_x(q(0,t), t)\\
&-\frac{4-b}{2}e^{q(0,t)} \int_{q(0,t)}^\infty e^{-\xi} \,\left(u^3-3u^2u_\xi\right)(\xi,t)\,d\xi\\
=& -u^2u_x(q(0,t), t)-\frac{b-2}{2}uu^2_x(q(0,t), t)-\frac{4-b}{2}u^3(q(0,t), t)
\endaligned
\end{equation*}
and
\begin{equation*}\aligned
I_-(t)
\geq& -u^2u_x(q(0,t), t)+\frac{b-2}{2}uu^2_x(q(0,t), t)\\
&+\frac{4-b}{2}e^{-q(0,t)} \int^{q(0,t)}_{-\infty} e^{\xi} \,\left(u^3+3u^2u_\xi\right)(\xi,t)\,d\xi\\
=& -u^2u_x(q(0,t), t)+\frac{b-2}{2}uu^2_x(q(0,t), t)+\frac{4-b}{2}u^3(q(0,t), t).
\endaligned
\end{equation*}
If we jointly use these estimates with \eqref{u+-} and \eqref{uxt}, we derive
\begin{equation}
\frac{d}{dt}\left\{2 u u_x(q(0,t), t)\right\}\leq (b-2)u^2(u^2-u_x^2)(q(0,t), t),
\label{dtu}
\end{equation}
which is the differential inequality that will drive the main norm inflation argument.

\begin{remark}
In the case of the NE ($b=3$), this estimate was first obtained by Jiang and Ni \cite{JN12}. It was later combined with the conservation of the $H^1$ norm by Yan, Li, and Zhang \cite{YLZ13} to derive a blow-up time result, which subsequently played a key role in the norm inflation argument of Guo et al. \cite{GLM19}.
\end{remark}


\section{Conclusion of the argument}
In this final section, we first reformulate our main result, Theorem \ref{main}, in a form better suited to the analytic tools developed earlier. We then introduce the initial data that will produce norm inflation and conclude by proving the reformulated theorem via a contradiction argument.


\subsection*{Reformulation step}
We first observe that if $\epsilon_2>\epsilon_1>0$ and the claims of Theorem \ref{main} hold for $\epsilon_1$, then they also hold for $\epsilon_2$. Indeed, the pair $(u_{\epsilon_1}, t_{\epsilon_1})$ corresponding to $\epsilon_1$ can be used for $\epsilon_2$. Therefore, it suffices to prove Theorem \ref{main} in the regime $0<\epsilon \ll 1$. 

Moreover, the statement can be relaxed by replacing $ \epsilon^{-1}$ in \eqref{-1} with $ \epsilon^{-\alpha}$ for any fixed $\alpha\in (0,1)$. Indeed, assume that for every $0<\epsilon \ll 1$ there exist $0<s_\epsilon<\epsilon$ and a solution $v_\epsilon$ of the $b$NE such that
\[
\|v_\epsilon(0)\|_{H^{3/2}}\lesssim \epsilon \qquad\text{and}\qquad \|v_\epsilon(s_\epsilon)\|_{H^{3/2}}\gtrsim \epsilon^{-\alpha}.
\]
Since $0<\epsilon \ll 1$ and $\alpha\in (0,1)$, we have $0<\epsilon^{1/\alpha}<\epsilon \ll 1$. Thus, if we let $u_\epsilon= v_{\epsilon^{1/\alpha}}$ and $t_\epsilon=s_{\epsilon^{1/\alpha}}$, then 
\[
0<t_\epsilon=s_{\epsilon^{1/\alpha}}<\epsilon^{1/\alpha}<\epsilon,
\]
\[
\|u_\epsilon(0)\|_{H^{3/2}}=\|v_{\epsilon^{1/\alpha}}(0)\|_{H^{3/2}}\lesssim \epsilon^{1/\alpha}<\epsilon,
\]
and
\[
\|u_\epsilon(t_\epsilon)\|_{H^{3/2}}= \|v_{\epsilon^{1/\alpha}}(s_{\epsilon^{1/\alpha}})\|_{H^{3/2}}\gtrsim (\epsilon^{1/\alpha})^{-\alpha}= \epsilon^{-1}.
\]

Hence, Theorem \ref{main} is equivalent to the following formulation.

\begin{theorem}
Let $b\geq 3$ and $0<\alpha<1$. For every $0<\epsilon\ll1$, there exist a solution $u_\epsilon$ to the $b$NE  and $0<t_\epsilon<\epsilon$ such that 
\begin{equation}
\|u_\epsilon(0)\|_{H^{3/2}}\lesssim \epsilon \qquad\text{and}\qquad\|u_\epsilon(t_\epsilon)\|_{H^{3/2}}\gtrsim \epsilon^{-\alpha}.\label{0a}
\end{equation}
\label{main-2}
\end{theorem}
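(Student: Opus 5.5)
We argue by contradiction: fix $b\geq 3$, $\alpha\in(0,1)$ and $0<\epsilon\ll1$, choose data $u_0$ satisfying \eqref{y0} with $\|u_0\|_{H^{3/2}}\lesssim\epsilon$, and assume that the corresponding solution obeys $\|u(t)\|_{H^{3/2}}\leq \epsilon^{-\alpha}$ for all $t\in[0,\epsilon)$. Under this assumption, the differential inequality \eqref{dtu} will force $uu_x(q(0,t),t)$ to reach $-\infty$ before time $\epsilon$. Since the $H^{3/2}$ norm alone does not control $\|u_x\|_{L^\infty}$, this is not an immediate contradiction, so the plan is to work in $H^s$ with $s$ slightly above $3/2$ and use the sharp lifespan bound \eqref{cs}.

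For the data we take the Guo et al. sequence for NE, namely an odd function $u_0$ (so that $y_0$ is odd and the sign condition \eqref{y0} is natural), with $u_0'(0)$ large and negative while the $H^{3/2}$ norm stays small. A typical choice is
\[
\hat u_0(\xi)\sim \epsilon\,\frac{\xi}{|\xi|^{2}\log|\xi|},\qquad |\xi|\geq 2 .
\]
Then $\|u_0\|_{H^{3/2}}\lesssim \epsilon$, while $-u_0'(0)\sim\epsilon\int^{N}\frac{d\xi}{\xi\log\xi}\sim \epsilon\log\log N$ diverges as the cutoff $N\to\infty$. We truncate at a frequency $N=N(\epsilon)$ chosen so that $u_0'(0)\leq -K(\epsilon)$ for a prescribed large $K$, and we check the sign condition on $y_0=(1-\partial_x^2)u_0$, for instance by defining $y_0$ directly as an odd function negative on $(0,\infty)$.

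The key steps, in order, are as follows.
\begin{enumerate}
\item Since $u_0$ is odd, $u_0(0)=0$, and hence $q(0,t)\equiv0$ and $u(0,t)=0$ for all $t$, because oddness is preserved by the equation. This makes $uu_x$ at the point vanish, so \eqref{dtu} as written is useless. We therefore instead evaluate \eqref{dtu} at a nearby point, or perturb $u_0$ by a small even bump so that $u(q(0,t),t)=:a(t)$ is small but nonzero.
\item With $m(t)=uu_x(q(0,t),t)$, inequality \eqref{dtu} gives $m'\leq \tfrac{b-2}{2}\bigl(u^4-4m^2/u^2\bigr)\cdot$(const), a Riccati inequality of the form $m'\lesssim \|u\|_{L^\infty}^4-c\,m^2/\|u\|_{L^\infty}^2$. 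The sup norm is bounded by $\|u\|_{H^1}\lesssim \epsilon^{-\alpha}$, or better by the $H^1$-type quantity available. Comparison with the Riccati ODE then yields blow-up of $m$ at a time $\lesssim \|u\|_{L^\infty}^2/|m(0)|$, which is less than $\epsilon$ once $K$ is large.
\item To contradict this blow-up, we pick $s=s(\epsilon)\downarrow3/2$ and use Theorem \ref{hh} with \eqref{cs}: the solution stays in $H^s$, hence has bounded $u_x$, on an interval of length $\sim (2s-3)/\|u_0\|_{H^s}^2$. The $H^s$ norm of the truncated data is at most $\epsilon N^{s-3/2}$, so we tune $N$ and $s$ so that this lifespan exceeds the Riccati blow-up time. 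This is impossible, which proves that the $H^{3/2}$ norm must exceed $\epsilon^{-\alpha}$ at some $t_\epsilon<\epsilon$.
\end{enumerate}
More precisely, one should run a continuity argument in step 3, in which the bound $\epsilon^{-\alpha}$ on $[0,t]$ controls the coefficients in \eqref{dtu}.

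The main obstacle is the bookkeeping in step 3. The initial slope grows only like $\epsilon\log\log N$, whereas the lifespan loss is $N^{2s-3}$ against the gain factor $(2s-3)$. We must choose $2s-3\sim 1/\log N$ so that $N^{2s-3}=O(1)$, and then verify that the Riccati blow-up time $\epsilon^{-2\alpha}/(\epsilon\log\log N)$ is below both $\epsilon$ and $(2s-3)\epsilon^{-2}$. This forces $\log\log N\gtrsim\epsilon^{-2-2\alpha}$, and it is exactly the condition $\alpha<1$ (together with the reformulation in Theorem \ref{main-2}) that should make these requirements compatible. A further issue is handling the degeneracy of the $u^2$ factor in \eqref{dtu} at the origin.
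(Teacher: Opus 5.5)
Your route is the paper's: argue by contradiction, run the Riccati inequality \eqref{dtu} along $q(0,t)$, and use Theorem \ref{hh} with $c_s\sim 1/(2s-3)$ and $s\downarrow 3/2$ to keep the solution alive past the Riccati blow-up time. Step 3 fails, however, for every choice of data, $N$ and $s$. Apply \eqref{inf-s} to $u_0'$ with $\sigma=s-1$. This gives $\|u_0'\|_{L^\infty}\lesssim (s-3/2)^{-1/2}\|u_0\|_{H^s}$, so the lifespan guaranteed by Theorem \ref{hh} obeys
\[
T_*=\frac{c\,(2s-3)}{\|u_0\|_{H^s}^2}\lesssim\frac{1}{\|u_0'\|_{L^\infty}^2},
\]
with constants uniform as $s\downarrow 3/2$. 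A comparison for $m'\le -c\,m^2+A$ only certifies blow-up by a time $\tau\sim 1/|m(0)|$. Since $|u_0(0)|\lesssim\|u_0\|_{H^{3/2}}\lesssim\epsilon$, we have $|m(0)|\le |u_0(0)|\,\|u_0'\|_{L^\infty}\lesssim\epsilon\,\|u_0'\|_{L^\infty}$. So $T_*>\tau$ forces $\|u_0'\|_{L^\infty}\lesssim\epsilon$, and then $\tau\gtrsim\epsilon^{-2}\gg\epsilon$.

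Your own numbers already show this. The requirement $\log\log N\gtrsim\epsilon^{-2-2\alpha}$ makes $(2s-3)\epsilon^{-2}\sim\epsilon^{-2}/\log N$ astronomically small, and the condition $\alpha<1$ plays no role.

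Importing the paper's choices $K=\epsilon^{-9}$, $s=(3+\epsilon^9)/2$ does not help either. The paper reaches \eqref{ke} by using $\|u_{0,K}\|_{H^s}\lesssim 2^{K(s-3/2)}K^{-2/3}$ with an absolute constant when $K(2s-3)=1$. That bound holds only for fixed $s$, and in this regime it would give $\lesssim\epsilon^6$. But $\|u_{0,K}\|_{H^s}\geq\|u_{0,K}\|_{H^1}\gtrsim u_{0,K}(0)\gtrsim 1$ by \eqref{u0k}.

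What you need instead is a continuation criterion controlled by the $H^{3/2}$ norm, not a lifespan lower bound. Combine the energy estimate $\frac{d}{dt}\|u\|_{H^3}\lesssim\|u\|_{W^{1,\infty}}^2\|u\|_{H^3}$ with the Brezis--Gallou\"et bound $\|u_x\|_{L^\infty}\lesssim 1+\|u\|_{H^{3/2}}\log^{1/2}(e+\|u\|_{H^3})$. Together they show that $\|u\|_{H^3}$ cannot blow up while $\|u\|_{H^{3/2}}\le\epsilon^{-\alpha}$. Now state the contradiction hypothesis on $[0,\min(\epsilon,T^{\text{max}}))$, which is the natural negation of the theorem. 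It then yields $T^{\text{max}}>\epsilon$. The Riccati argument closes for data satisfying \eqref{y0} with $u_0(0)u_0'(0)\lesssim-\max(\epsilon^{-1},\epsilon^{-2\alpha})$, for example $\epsilon u_{0,K}$ with $K$ large. No sharp $c_s$ and no coupling between $s$ and the data are needed.

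Steps 1 and 2 have separate problems.

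\emph{Step 1.} Odd data give $u_0(0)=0$, hence $m(0)=0$, and \eqref{dtu} then yields nothing. Moreover, the $b$NE is cubic, so $u\mapsto -u(-x,t)$ is not a symmetry: it reverses time. Oddness is therefore not propagated, and $q(0,t)\equiv 0$ is unjustified. Your proposed repair (an even bump, or a nearby point) is not carried out and would have to preserve \eqref{y0}. It also aims at the wrong regime: you need $u_0(0)$ bounded below, not small. The Guo et al.\ data achieve this by placing the negative bump of $\phi$ far to the right, which gives $u_{0,K}(0)\gtrsim1$ and $u_{0,K}'(0)\sim -K^{1/3}$.

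\emph{Step 2.} Since $u^2u_x^2=m^2$, \eqref{dtu} reads directly $m'\le\frac{b-2}{2}(u^4-m^2)$. There is no $m^2/u^2$ term and no degeneracy in the inequality itself.

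\emph{Data.} Your displayed $\hat u_0$ decays like $|\xi|^{-1}\log^{-1}|\xi|$, so its $H^{3/2}$ norm is not $\lesssim\epsilon$ uniformly in the cutoff; you presumably meant $\xi|\xi|^{-3}\log^{-1}|\xi|$. Even then, its $\log\log N$ growth of $-u_0'(0)$ is far weaker than the $(\log N)^{1/3}$ growth of the Guo et al.\ sequence.
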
 


\subsection*{Selection of the initial data}
As in the Novikov equation case ($b=3$), we use the sequence of data introduced by Guo et al. in Remark 3.3 of \cite{GLM19}. Taking $p=r=2$ in that remark, consider the sequence $(u_{0,K})_{K\geq 1}$ given by
\begin{equation*}
u_{0,K}(x)=\sum_{k=1}^K \frac{1}{k^{2/3}}(1-\partial^2_x)^{-1}(\phi_k)(x),\quad \forall\,K\geq 1,
\end{equation*}
where
\begin{equation*}
\phi_k(x)=2^k\phi(2^k x), \quad \phi(x)=\psi(x+2)-\psi(x-200),
\end{equation*}
and $\psi:\mathbb{R}\to [0,1]$ is even, smooth, radially decreasing, supported in $[-8/5,8/5]$, and equal to $1$ on $[-5/4, 5/4]$. 

Most properties of $u_{0,K}$ were established in \cite{GLM19}; the remaining ones follow by straightforward adaptations of their arguments. The sequence satisfies:

\begin{itemize}

\item for each $K\geq1$, $u_{0, K}$ is real-valued and $u_{0,K}\in H^\infty$;

\item for each $K\geq1$, $y_{0,K}=(1-\partial^2_x)u_{0,K}$ satisfies
\begin{equation}\label{y0k}
y_{0, K}\geq 0\ \text{on}\ (-\infty, 0], \quad y_{0, K}(0)=0, \quad y_{0, K}\leq 0 \ \text{on}\ [0,\infty);
\end{equation}

\item the Sobolev estimate
\begin{equation*}
\|u_{0, K}\|_{H^s}\lesssim
\begin{cases}
1 & \text{if } s \leq \tfrac{3}{2}, \\
\dfrac{2^{K(s - 3/2)}}{K^{2/3}} & \text{if } s > \tfrac{3}{2}.
\end{cases}
\end{equation*}
holds uniformly in $K$ (i.e., the constant consolidated in $\lesssim$ is independent of $K$);

\item moreover,
\begin{equation} u_{0,K}(0)\gtrsim 1\qquad \text{and}\qquad
u_{0, K}'(0)\sim -K^{1/3}\label{u0k}
\end{equation}
hold uniformly in $K$.
\end{itemize}

\subsection*{Proof of Theorem \ref{main-2}}
We argue by contradiction. Assume that there exists $0<\epsilon\ll1$ such that for every solution $u_\epsilon$ to the $b$NE with 
\[
\|u_\epsilon(0)\|_{H^{3/2}}\lesssim \epsilon
\] 
and maximal time of existence $T^{\text{max}}_\epsilon>\epsilon$, one has
\begin{equation*}
\sup_{t\in [0,\epsilon]} \|u_\epsilon(t)\|_{H^{3/2}}\ll \epsilon^{-\alpha}.
\end{equation*}
We construct a solution violating this property by producing finite-time blowup before $\epsilon$.

For the given $\epsilon$, let $(v_{0,K})_{K\geq 1}$ be defined as
\begin{equation*}
v_{0, K}(x)=\epsilon \,u_{0,K}(x).
\end{equation*}
Using the properties of $(u_{0,K})_{K\geq 1}$ and Theorem \ref{hh}, we infer that the Cauchy problem \eqref{cp} with data $v_{0, K}\in H^\infty\subset H^s$ admits a unique saturated solution 
\[
v_K\in C([0, T^{\text{max}}_K); H^s)
\] 
with 
\[
T^{\text{max}}_K\gtrsim \frac{2s-3}{\|v_{0,K}\|^2_{H^s}}=  \frac{2s-3}{\epsilon^2\,\|u_{0,K}\|^2_{H^s}}\gtrsim \frac{K^{4/3}(2s-3)}{\epsilon^2 \,2^{K(2s-3)}},
\]
and
\[
\|v_K(t)\|_{H^s}\leq 2 \|v_{0,K}\|_{H^s}\lesssim \frac{\epsilon\, 2^{K(s-3/2)}}{K^{2/3}}, \qquad \forall\, 0\leq t\lesssim \frac{K^{4/3}(2s-3)}{\epsilon^2 \,2^{K(2s-3)}}.
\]
Moreover,
\begin{equation*}
\|v_{0,K}\|_{H^{3/2}}= \epsilon\,\|u_{0,K}\|_{H^{3/2}}\lesssim \epsilon.
\end{equation*}

It follows that, if 
\begin{equation}\boxed{
\frac{K^{4/3}(2s-3)}{\epsilon^2 \,2^{K(2s-3)}}\gg \epsilon,}\label{ke}
\end{equation}
then
\begin{equation}
T^{\text{max}}_K>\epsilon\label{tke}
\end{equation}
and $v_K$ satisfies the hypothesis of the contradiction assumption. In particular, we have
\begin{equation}
\sup_{t\in [0,\epsilon]} \|v_K(t)\|_{H^{3/2}}\ll \epsilon^{-\alpha}.\label{32}
\end{equation}

Next, based on our discussion in Section \ref{3} and taking into account the properties of $v_{0,K}$ (e.g., $v_{0, K}\in H^\infty$), the particle line associated to the solution $v_K$ is well-defined, with
\[
q=q(x,t)\in C^1(\R\times [0,T^{\text{max}}_K); \R).
\]
If we let
\[
f(t)=(v_K v_{K,x})(q(0,t), t),
\]
then, by \eqref{dtu}, we deduce
\begin{equation*}
\frac{d}{dt}f(t)\leq -\frac{b-2}{2}f^2(t) +\frac{b-2}{2}v_K^4(q(0,t), t),\quad \forall\,t\in [0,T_K^{\text{max}}).
\end{equation*}
Using \eqref{tke}, \eqref{32}, and the embedding $H^{3/2}\subset L^\infty$, we derive
\begin{equation}
\frac{d}{dt}f(t)\leq -\frac{b-2}{2}f^2(t) +c_1\,\epsilon^{-4\alpha},\quad \forall\,t\in [0,\epsilon],\label{ft}
\end{equation}
for some $0<c_1\ll 1$. 

At $t=0$, we have
\[
f(0)=(v_K v_{K,x})(0, 0)=v_{0, K}(0)v'_{0, K}(0)=\epsilon^2 u_{0, K}(0)u'_{0, K}(0),
\]
which, by \eqref{u0k}, implies
\begin{equation}
f(0)\lesssim -\epsilon^2\, K^{1/3}.\label{f0ek}
\end{equation}
To initiate the blowup, we want to ensure that 
\begin{equation}
\frac{d}{dt}f(0)<0.\label{df0}
\end{equation}
By \eqref{ft}, we have
\[
\frac{d}{dt}f(0)\leq -\frac{b-2}{2}f^2(0) +c_1\,\epsilon^{-4\alpha},
\]
and, thus, \eqref{df0} holds if we enforce
\begin{equation}
f(0)< -c_2\,\epsilon^{-2\alpha}, \label{f0}
\end{equation}
for some $c_2>0$ that we take to satisfy 
\begin{equation}
c_1< \frac{b-2}{2}\,c_2^2.\label{c}
\end{equation}
Using \eqref{f0ek}, we see that \eqref{f0} holds if 
\begin{equation}
\boxed{\epsilon^2\, K^{1/3}\gtrsim \epsilon^{-2\alpha}.}\label{f0e}
\end{equation}

Following this, we prove:

\begin{lemma}
Assume that $f=f(t)$ satisfies \eqref{ft}, \eqref{df0}, and \eqref{f0} (with \eqref{c} as part of it). Then
\begin{equation}
f(t)< -c_2\,\epsilon^{-2\alpha}, \quad \forall\,t\in [0,\epsilon].\label{fce}
\end{equation}
\end{lemma}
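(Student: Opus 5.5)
This is a continuity (barrier) argument for the Riccati-type inequality \eqref{ft}; the key point is that the level $-c_2\epsilon^{-2\alpha}$ is a strict barrier for $f$. By \eqref{c}, for any $t\in[0,\epsilon]$ with $f(t)\le -c_2\epsilon^{-2\alpha}$ we have $f^2(t)\ge c_2^2\epsilon^{-4\alpha}$. Then \eqref{ft} gives
\[
\frac{d}{dt}f(t)\le -\frac{b-2}{2}c_2^2\epsilon^{-4\alpha}+c_1\epsilon^{-4\alpha}<0 .
\]
Note that $b\ge3$ ensures $\frac{b-2}{2}>0$. So whenever $f$ sits at or below the threshold, it is strictly decreasing.

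The plan is to argue by contradiction, using that $f\in C^1([0,T_K^{\max}))$. The $C^1$ regularity follows since $q\in C^1$ and $v_K\in C^1([0,T^{\max}_K);H^2)\subset C^1$ in $(x,t)$ for smooth data. Also $[0,\epsilon]\subset[0,T^{\max}_K)$ by \eqref{tke}. Suppose \eqref{fce} fails, and let
\[
t^*=\inf\{t\in[0,\epsilon]: f(t)\ge -c_2\epsilon^{-2\alpha}\}.
\]
By \eqref{f0}, $f(0)<-c_2\epsilon^{-2\alpha}$, so continuity gives $t^*>0$ and $f(t^*)=-c_2\epsilon^{-2\alpha}$. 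Moreover $f(t)<-c_2\epsilon^{-2\alpha}$ on $[0,t^*)$.

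The contradiction comes from the behaviour of $f$ just before $t^*$. On $[0,t^*]$ we have $f\le -c_2\epsilon^{-2\alpha}$, so the displayed estimate applies and $f'<0$ on $[0,t^*]$. Hence $f$ is strictly decreasing there, and
\[
f(t^*)<f(0)<-c_2\epsilon^{-2\alpha},
\]
contradicting $f(t^*)=-c_2\epsilon^{-2\alpha}$. Equivalently, one can use the mean value theorem on $[0,t^*]$: $f(t^*)-f(0)=t^*f'(\tau)<0$.

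Hypothesis \eqref{df0} is then automatically consistent with, and in fact implied by, \eqref{ft}, \eqref{f0} and \eqref{c}; the argument only needs it at $t=0$. There is no substantial obstacle. The only points to check carefully are the $C^1$ regularity of $f$ on the closed interval $[0,\epsilon]$, guaranteed by \eqref{tke}, and that the inequality \eqref{ft} holds pointwise there.
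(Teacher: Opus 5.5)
Your proposal is correct and follows essentially the same first-crossing-time argument as the paper. At the first time $t^*$ where $f$ reaches $-c_2\epsilon^{-2\alpha}$, \eqref{ft} together with \eqref{c} forces $f'<0$ on $[0,t^*)$, so $f(t^*)\le f(0)<-c_2\epsilon^{-2\alpha}$, which contradicts $f(t^*)=-c_2\epsilon^{-2\alpha}$.
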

\begin{proof}
Assume by contradiction that for some $t_0\in(0,\epsilon]$ we have
\[
f(t)< -c_2\,\epsilon^{-2\alpha}, \quad \forall\,t\in [0,t_0),
\] 
and 
\[
f(t_0)= -c_2\,\epsilon^{-2\alpha}.
\]
Then we infer
\[
\frac{d}{dt}f(t)\leq -\frac{b-2}{2}f^2(t) +c_1\,\epsilon^{-4\alpha}<(c_1-\frac{b-2}{2}\,c_2^2)\epsilon^{-4\alpha}<0, \quad \forall\,t\in [0,t_0),
\]
which implies
\[
f(t_0)=\lim_{t\to t_0-}f(t)\leq f(0)< -c_2\,\epsilon^{-2\alpha}.
\]
This provides the desired contradiction.
\end{proof}

Now, by using \eqref{ft} and \eqref{fce}, we obtain
\begin{equation*}
\aligned
\frac{d}{dt}f(t)\leq -\frac{b-2}{2}f^2(t) &+c_1\,\epsilon^{-4\alpha}= -\frac{b-2}{2}f^2(t)+\frac{c_1}{\frac{b-2}{2}\,c_2^2} \,\frac{b-2}{2}\,c_2^2\,\epsilon^{-4\alpha}\\
&<\left(-1+\frac{c_1}{\frac{b-2}{2}\,c_2^2}\right) f^2(t), \quad \forall\,t\in [0,\epsilon],
\endaligned
\end{equation*}
which can be restated as
\[
\frac{d}{dt}\left\{\frac{1}{f(t)}\right\}>1-\frac{c_1}{\frac{b-2}{2}\,c_2^2}, \quad \forall\,t\in [0,\epsilon].
\]
Since $f(t)<0$ for all $t\in [0,\epsilon]$, we derive by integration that
\[
- \frac{1}{f(0)}>\frac{1}{f(t)}- \frac{1}{f(0)}>\left(1-\frac{c_1}{\frac{b-2}{2}\,c_2^2}\right)t, \quad \forall\,t\in [0,\epsilon].
\]
Based on \eqref{c}, we deduce
\[
1-\frac{c_1}{\frac{b-2}{2}\,c_2^2}>0
\]
and blowup occurs before $\epsilon$ if 
\[
- \frac{1}{f(0)}\leq \left(1-\frac{c_1}{\frac{b-2}{2}\,c_2^2}\right)\epsilon,
\]
which holds if 
\[
\epsilon^2\,f^2(0)\gtrsim 1.
\]
By \eqref{f0ek} and \eqref{f0e}, 
\[
\epsilon^2\,f^2(0)\gtrsim \epsilon^{2-4\alpha},
\]
and, since $0<\epsilon\ll 1$, it suffices to impose 
\begin{equation}
\boxed{\alpha\geq 1/2.}\label{al}
\end{equation}

Finally, given $0<\epsilon\ll 1$ and $0<\alpha<1$, we consolidate all the conditions we need to ensure in order for the argument to go through: \eqref{ke}, \eqref{f0e}, and \eqref{al}. First, for \eqref{al}, we let 
\[
\alpha= 1/2.
\]
Then \eqref{f0e} becomes
\[
\epsilon^3\, K^{1/3}\gtrsim 1,
\]
which is valid if we take
\[
K=\epsilon^{-9}.
\]
Then \eqref{ke} becomes
\[
\frac{\epsilon^{-15}(2s-3)}{2^{\epsilon^{-9}(2s-3)}}\gg 1.
\]
By choosing 
\[
s=\frac{3+\epsilon^9}{2},
\]
the previous estimate can be restated as
\[
\epsilon^{-6}\gg 1,
\]
which is valid since $0<\epsilon\ll 1$.

With these choices for $\alpha$, $K$, and $s$, the constructed solution $v_K$ to the $b$NE has maximal time of existence $T^{\text{max}}_K>\epsilon$, while the function 
\[
t\mapsto (v_K v_{K,x})(q(0,t), t)
\]
blows up before time $\epsilon$. This contradicts the well-definedness of the particle trajectory on the entire interval of existence. Hence, Theorem \ref{main-2} follows.


\section*{Acknowledgements}
Dan-Andrei Geba is deeply grateful to God and the Holy Theotokos for Their love, patience, and mercy, which inspired him while working on this paper. He is also thankful to the co-authors for inviting him to collaborate on this project and to Alex Himonas for the opportunity to visit and work with him at Notre Dame. 

Finally, the second author was partially supported by a grant from the 
Simons Foundation (\#524469 to Alex Himonas).  


\bibliographystyle{amsplain}
\bibliography{bNE}

@book {L13,
    AUTHOR = {Lannes, D.},
     TITLE = {The water waves problem},
    SERIES = {Mathematical Surveys and Monographs},
    VOLUME = {188},
      NOTE = {Mathematical analysis and asymptotics},
 PUBLISHER = {American Mathematical Society, Providence, RI},
      YEAR = {2013},
     PAGES = {xx+321},}

@article {CH93,
    AUTHOR = {Camassa, R. and Holm, D.},
     TITLE = {An integrable shallow water equation with peaked solitons},
   JOURNAL = {Phys. Rev. Lett.},
  FJOURNAL = {Physical Review Letters},
    VOLUME = {71},
      YEAR = {1993},
    NUMBER = {11},
     PAGES = {1661--1664},
}

@article {FF81,
    AUTHOR = {Fuchssteiner, B. and Fokas, A. S.},
     TITLE = {Symplectic structures, their {B}\"acklund transformations and
              hereditary symmetries},
   JOURNAL = {Phys. D},
  FJOURNAL = {Physica D. Nonlinear Phenomena},
    VOLUME = {4},
      YEAR = {1981/82},
    NUMBER = {1},
     PAGES = {47--66},}

@article {HS03,
    AUTHOR = {Holm, D. and Staley, M.},
     TITLE = {Wave structure and nonlinear balances in a family of
              evolutionary {PDE}s},
   JOURNAL = {SIAM J. Appl. Dyn. Syst.},
  FJOURNAL = {SIAM Journal on Applied Dynamical Systems},
    VOLUME = {2},
      YEAR = {2003},
    NUMBER = {3},
     PAGES = {323--380},
}

@article {HS032,
    AUTHOR = {Holm, D. and Staley, M.},
     TITLE = {Nonlinear balance and exchange of stability of dynamics of
              solitons, peakons, ramps/cliffs and leftons in a {$1+1$}
              nonlinear evolutionary {PDE}},
   JOURNAL = {Phys. Lett. A},
  FJOURNAL = {Physics Letters. A},
    VOLUME = {308},
      YEAR = {2003},
    NUMBER = {5-6},
     PAGES = {437--444},
}

@article {R01,
    AUTHOR = {Rodr\'iguez-Blanco, G.},
     TITLE = {On the {C}auchy problem for the {C}amassa-{H}olm equation},
   JOURNAL = {Nonlinear Anal.},
  FJOURNAL = {Nonlinear Analysis. Theory, Methods \& Applications. An
              International Multidisciplinary Journal},
    VOLUME = {46},
      YEAR = {2001},
    NUMBER = {3},
     PAGES = {309--327},
}

@article {LO00,
    AUTHOR = {Li, Y. and Olver, P.},
     TITLE = {Well-posedness and blow-up solutions for an integrable
              nonlinearly dispersive model wave equation},
   JOURNAL = {J. Differential Equations},
  FJOURNAL = {Journal of Differential Equations},
    VOLUME = {162},
      YEAR = {2000},
    NUMBER = {1},
     PAGES = {27--63},}

@article {GL11,
    AUTHOR = {Gui, G. and Liu, Y.},
     TITLE = {On the {C}auchy problem for the {D}egasperis-{P}rocesi
              equation},
   JOURNAL = {Quart. Appl. Math.},
  FJOURNAL = {Quarterly of Applied Mathematics},
    VOLUME = {69},
      YEAR = {2011},
    NUMBER = {3},
     PAGES = {445--464},}

@article {GLT08,
    AUTHOR = {Gui, G. and Liu, Y. and Tian, L.},
     TITLE = {Global existence and blow-up phenomena for the peakon
              {$b$}-family of equations},
   JOURNAL = {Indiana Univ. Math. J.},
  FJOURNAL = {Indiana University Mathematics Journal},
    VOLUME = {57},
      YEAR = {2008},
    NUMBER = {3},
     PAGES = {1209--1234},
}

@article {HH22,
    AUTHOR = {Himonas, A. and Holliman, C.},
     TITLE = {Instability and nonuniqueness for the {$b$}-{N}ovikov
              equation},
   JOURNAL = {J. Nonlinear Sci.},
  FJOURNAL = {Journal of Nonlinear Science},
    VOLUME = {32},
      YEAR = {2022},
    NUMBER = {4},
     PAGES = {Paper No. 46, 29},
}

@article {HHK18,
    AUTHOR = {Himonas, A. and Holliman, C. and Kenig, C.},
     TITLE = {Construction of 2-peakon solutions and ill-posedness for the
              {N}ovikov equation},
   JOURNAL = {SIAM J. Math. Anal.},
  FJOURNAL = {SIAM Journal on Mathematical Analysis},
    VOLUME = {50},
      YEAR = {2018},
    NUMBER = {3},
     PAGES = {2968--3006},
}

@article {HGH16,
    AUTHOR = {Himonas, A. and Grayshan, K. and Holliman,
              C.},
     TITLE = {Ill-posedness for the {$b$}-family of equations},
   JOURNAL = {J. Nonlinear Sci.},
  FJOURNAL = {Journal of Nonlinear Science},
    VOLUME = {26},
      YEAR = {2016},
    NUMBER = {5},
     PAGES = {1175--1190},
}

@article {HHG14,
    AUTHOR = {Himonas, A. and Holliman, C. and Grayshan,
              K.},
     TITLE = {Norm inflation and ill-posedness for the
              {D}egasperis-{P}rocesi equation},
   JOURNAL = {Comm. Partial Differential Equations},
  FJOURNAL = {Communications in Partial Differential Equations},
    VOLUME = {39},
      YEAR = {2014},
    NUMBER = {12},
     PAGES = {2198--2215},
}

@article {HH14,
    AUTHOR = {Himonas, A. and Holliman, C.},
     TITLE = {The {C}auchy problem for a generalized {C}amassa-{H}olm
              equation},
   JOURNAL = {Adv. Differential Equations},
  FJOURNAL = {Advances in Differential Equations},
    VOLUME = {19},
      YEAR = {2014},
    NUMBER = {1-2},
     PAGES = {161--200},
}

@article {GLM19,
    AUTHOR = {Guo, Z. and Liu, X. and Molinet, L. and Yin,
              Z.},
     TITLE = {Ill-posedness of the {C}amassa-{H}olm and related equations in
              the critical space},
   JOURNAL = {J. Differential Equations},
  FJOURNAL = {Journal of Differential Equations},
    VOLUME = {266},
      YEAR = {2019},
    NUMBER = {2-3},
     PAGES = {1698--1707},
}

@article {B06,
    AUTHOR = {Byers, P.},
     TITLE = {Existence time for the {C}amassa-{H}olm equation and the
              critical {S}obolev index},
   JOURNAL = {Indiana Univ. Math. J.},
  FJOURNAL = {Indiana University Mathematics Journal},
    VOLUME = {55},
      YEAR = {2006},
    NUMBER = {3},
     PAGES = {941--954},
}

@article {JN12,
    AUTHOR = {Jiang, Z. and Ni, L.},
     TITLE = {Blow-up phenomenon for the integrable {N}ovikov equation},
   JOURNAL = {J. Math. Anal. Appl.},
  FJOURNAL = {Journal of Mathematical Analysis and Applications},
    VOLUME = {385},
      YEAR = {2012},
    NUMBER = {1},
     PAGES = {551--558},}

@article {YLZ13,
    AUTHOR = {Yan, W. and Li, Y. and Zhang, Y.},
     TITLE = {The {C}auchy problem for the {N}ovikov equation},
   JOURNAL = {NoDEA Nonlinear Differential Equations Appl.},
  FJOURNAL = {NoDEA. Nonlinear Differential Equations and Applications},
    VOLUME = {20},
      YEAR = {2013},
    NUMBER = {3},
     PAGES = {1157--1169},}

@article {HH12,
    AUTHOR = {Himonas, A. and Holliman, C.},
     TITLE = {The {C}auchy problem for the {N}ovikov equation},
   JOURNAL = {Nonlinearity},
  FJOURNAL = {Nonlinearity},
    VOLUME = {25},
      YEAR = {2012},
    NUMBER = {2},
     PAGES = {449--479},
}

@article {N09,
    AUTHOR = {Novikov, V.},
     TITLE = {Generalizations of the {C}amassa-{H}olm equation},
   JOURNAL = {J. Phys. A},
  FJOURNAL = {Journal of Physics. A. Mathematical and Theoretical},
    VOLUME = {42},
      YEAR = {2009},
    NUMBER = {34},
     PAGES = {342002, 14},
}

@article {HK09,
    AUTHOR = {Himonas, A. and Kenig, C.},
     TITLE = {Non-uniform dependence on initial data for the {CH} equation
              on the line},
   JOURNAL = {Differential Integral Equations},
  FJOURNAL = {Differential and Integral Equations. An International Journal
              for Theory \& Applications},
    VOLUME = {22},
      YEAR = {2009},
    NUMBER = {3-4},
     PAGES = {201--224},
}

@book {W74,
    AUTHOR = {Whitham, G. B.},
     TITLE = {Linear and nonlinear waves},
    SERIES = {Pure and Applied Mathematics},
 PUBLISHER = {Wiley-Interscience [John Wiley \& Sons], New
              York-London-Sydney},
      YEAR = {1974},
     PAGES = {xvi+636},}

\end{document}